\declaretheorem[name=Theorem,numberwithin=section]{thm}
\declaretheorem[name=Remark,style=remark,sibling=thm]{rem}
\declaretheorem[name=Lemma,sibling=thm]{lemma}
\numberwithin{equation}{section}
\crefname{lemma}{Lemma}{Lemmata}
\crefname{prop}{Proposition}{Propositions}
\crefname{thm}{Theorem}{Theorems}
\crefname{cor}{Corollary}{Corollaries}
\crefname{defn}{Definition}{Definitions}
\crefname{example}{Example}{Examples}
\crefname{rem}{Remark}{Remarks}
\crefname{assum}{Assumption}{Assumptions}
\crefname{notation}{Notation}{Notation}
\newcommand{\ti}{\tilde}
\newcommand{\cn}{\colon}
\newcommand{\ov}{\overline}
\newcommand{\R}{\mathbb{R}}
\newcommand{\bbS}{\mathbb{S}}
\newcommand{\8}{\infty}
\newcommand{\be}{\beta}
\newcommand{\de}{\delta}
\newcommand{\e}{\epsilon}
\newcommand{\ka}{\kappa}
\newcommand{\D}{\Delta}
\newcommand{\G}{\Gamma}
\newcommand{\La}{\Lambda}
\newcommand{\cW}{\mathcal{W}}
\newcommand{\cR}{\mathcal{R}}
\newcommand{\del}{\partial}
\newcommand{\n}{\nabla}
\newcommand{\fa}{\forall}
\newcommand{\rt}{\sqrt}
\newcommand{\ip}[2]{\left\langle #1,#2 \right\rangle}
\newcommand{\fr}[2]{\frac{#1}{#2}}
\newcommand{\x}{\times}
\DeclareMathOperator{\tr}{tr}
\DeclareMathOperator{\Rm}{Rm}
\DeclareMathOperator{\Rc}{Rc}
\DeclareMathOperator{\grad}{grad}
\DeclareMathOperator{\ad}{ad}
\newcommand{\pf}[1]{\begin{proof} #1 \end{proof}}
\newcommand{\eq}[1]{\begin{equation}\begin{alignedat}{2} #1 \end{alignedat}\end{equation}}
\newcommand{\br}[1]{\left(#1\right)}
\newcommand{\enum}[1]{\begin{enumerate}[(i)] #1 \end{enumerate}}
\newcommand{\Ra}{\Rightarrow}
\newcommand{\ra}{\rightarrow}
\newcommand{\hp}{\hphantom}
\newcommand{\q}{\quad}
\newcommand{\auskommentieren}[1]{}
\newcommand{\beq}{\begin{equation}}
\newcommand{\eeq}{\end{equation}}
\newcommand{\bea}{\begin{equation}\begin{aligned}}
\newcommand{\eea}{\end{aligned}\end{equation}}
\numberwithin{equation}{section}
\DeclareMathOperator{\grade}{grad}
\DeclareMathOperator{\Tr}{Tr}
\author[P. Bryan]{Paul Bryan}
\address{Department of Mathematics, Macquarie University
NSW 2109, Australia}
\email{paul.bryan@mq.edu.au}
\author[H. Kr\"oner]{Heiko Kr\"oner }
\address{Albert-Ludwigs-Universit\"{a}t,
Mathematisches Institut, Ernst-Zermelo-Str.~1, 79104
Freiburg, Germany}
\email{heiko.kroener@uni-freiburg.de}
\author[J. Scheuer]{Julian Scheuer }
\address{Albert-Ludwigs-Universit\"{a}t,
Mathematisches Institut, Ernst-Zermelo-Str.~1, 79104
Freiburg, Germany}
\email{julian.scheuer@math.uni-freiburg.de}\title[Harnack inequalities in Riemannian manifolds]
{Li-Yau gradient estimates for curvature flows in positively curved manifolds}
\begin{document}
\begin{abstract}
We prove differential Harnack inequalities for flows of strictly convex hypersurfaces by powers $p$, $0<p<1$,
of the mean curvature in Einstein manifolds with a positive lower bound on the sectional curvature. We assume that this lower bound is sufficiently large compared to the derivatives of the curvature tensor of the ambient 
space and that the mean curvature of the 
initial hypersurface is sufficiently large compared to the ambient geometry. We also obtain some new Harnack inequalities for more general curvature flows in the sphere, as well as a monotonicity estimate for the mean curvature flow in non-negatively curved, locally symmetric spaces.
\end{abstract}
\keywords{Curvature flow; Harnack inequality; Pinching}
\subjclass[2010]{53C21, 53C44}
\date{\today}

\maketitle
\section{Introduction}

In a seminal paper, Li and Yau \cite{LiYau:/1986} presented a new method to deduce a Harnack inequality for positive solutions of the heat equation
\eq{\del_{t}u-\D u=0}
on a compact Riemannian manifold of non-negative Ricci curvature, by proving the gradient estimate
\eq{\label{LY}\fr{\del_{t}u}{u}-\fr{|\n u|^{2}}{u^{2}}+\fr{n}{2t}\geq 0.}
Using an integration over space-time paths, from here it is possible to deduce a classical Harnack estimate, which gives an estimate between the spatial maxima and minima of the solution at different times. Hence \eqref{LY} is also called a {\it{differential Harnack estimate}}. After this work appeared, the study of differential Harnack estimates for curvature flows of convex hypersurfaces in Euclidean space was initiated at about the same time by Chow \cite{Chow:06/1991} for the Gauss curvature flow, Hamilton \cite{Hamilton:/1993,Hamilton:/1995} for the Ricci- and the mean curvature flow and by Andrews \cite{Andrews:09/1994} for more general curvature flows in the Euclidean space. It became apparent that the appropriate generalisation to positivity of solutions of the heat equation is some sort of positivity of curvature, such as convexity in the case of hypersurface flows. For example, for the mean curvature flow the differential Harnack estimate reads
\eq{\del_{t}H-b(\n H,\n H)+\fr{1}{2t}H\geq 0,}
where $b$ is the inverse of the second fundamental form, $H$ is its trace and $\n$ the Levi-Civita connection of the induced metric of the flow hypersurfaces. Here the condition of convexity ensures that $b$ is defined.

 Several other similar results followed, see \cite{Ivaki:11/2015,Li:/2011,Smoczyk:/1997,Wang:11/2007}. The first differential Harnack estimates for flows in non-Euclidean ambient space were proven by PB and Ivaki for the mean curvature flow in the sphere \cite{BryanIvaki:08/2015}, followed up by a generalisation of the speed function in this setting due to PB, Ivaki and JS \cite{BIS1}. The most general ambient spaces, in which the deduction of a differential Harnack estimate was possible so far, are locally symmetric Einstein spaces of non-negative sectional curvatures, cf. \cite{BIS4}. 
 
The object of this paper is to investigate to what extent these assumptions can be relaxed.

In \Cref{main_result1}, we prove Harnack inequalities for flows by powers, $0<p<1$,
of the mean curvature in Einstein manifolds where we do not assume
that the ambient space is locally symmetric
but instead a lower bound for its sectional curvatures and a relation between the mean curvature and the ambient geometry.

We also prove Harnack inequalities in the sphere, \Cref{Sphere}, for flows by powers $F^p$, $p>1$ of a convex,
$1$-homogeneous curvature function $F$. These are completely new, but with the added restriction that the solutions satisfy a certain pinching.

Finally, when the Einstein condition is dropped, we can still get a monotonicity estimate for the so called {\it{Harnack quadratic}} in \cref{main_result2}.

We introduce our results more precisely, after fixing the relevant notation. Let $n\geq 2$.
In an $(n+1)$-dimensional Riemannian manifold $(N, \bar g)$ with Levi-Civita connection $\bar\n$ and Riemannian curvature
\eq{\ov{\Rm}(X,Y)Z=\bar\n_{X}\bar\n_{Y}Z-\bar\n_{Y}\bar\n_{X}Z-\bar\n_{[X,Y]}Z}
 we consider curvature flows, i.e. time-dependent families of immersions
\eq{x\cn (0,T)\x M\rightarrow N}
of a closed, connected and orientable smooth manifold $M^n$, which satisfy
\begin{equation} \label{original_param}
 \dot {x} = -f\nu,
\end{equation}
where the flow hypersurfaces
\eq{M_{t}=x(t,M)}
with induced metric $g$ and Levi-Civita connection $\n$
will always assumed to be strictly convex, i.e. the second fundamental form
\eq{h=-\bar g(\bar \n^{2} x,\nu)}
is positive definite with a particular choice of a smooth normal field $\nu$.
The function $f\in C^{\8}(\G_{+})$ with
\eq{\G_{+}=\{\ka\in \R^{n}\cn \ka_{i}>0\q\fa 1\leq i\leq n\}}
 is in increasing dependence on the principal curvatures of the flow hypersurfaces and by well known methods may also be viewed as a function of the second fundamental form and the metric or as a function of the Weingarten operator $\cW$,
\eq{\label{f(g,h)}f=f(g,h),\q f=f(\cW),}
\cite{Andrews:/2007,Gerhardt:/2006,Scheuer:06/2018}. Due to the positivity of $h$, the twice contravariant tensor $b=(b^{ij})$ given by
\eq{b^{ik}h_{kj}=\de^{i}_{j}}
is well defined, where we have used a coordinate representation of the respective quantities in a local frame $(e_{i})$.

Our first main result provides the first estimate of this kind in non-locally symmetric ambient spaces for flows by small powers of the mean curvature.
\begin{thm} \label{main_result1}
Let $n\geq 2$ and $(N^{n+1},\bar g)$ be a Riemannian Einstein manifold with lower bound $c_1>0$ on the sectional curvatures. Let $0<p<1$. Then along any strictly convex solution
\eq{x\cn (0,T)\x M^n\ra N
}
to
\eq{\label{Main-B}\dot{x}=-H^p\nu,}
which satisfies
\eq{\label{Main-A}H\geq \fr{2np}{\min(1-p,2p)}\|\bar\n\ov{\Rm}\|,
}
the following Harnack inequality holds:
\begin{equation}
\partial_t H^p-b(\nabla H^p, \nabla H^p)+\frac{p}{(p+1)t}H^p \ge 0.
\end{equation}
\end{thm}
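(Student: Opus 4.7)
The plan is to apply the parabolic maximum principle to the Harnack quantity
\eq{Q:=\del_t H^p-b(\n H^p,\n H^p)+\fr{p}{(p+1)t}H^p.}
Set $F=H^p$, so that $F^{ij}=pH^{p-1}g^{ij}$ and the natural linearised operator along the flow \eqref{Main-B} is $\cL:=\del_t-pH^{p-1}\D$. The first step is to derive the evolution equation of $Q$ under $\cL$, using the standard evolution equations of $h$, $H$ and $b$ along the flow, the Codazzi identity $\n_i h_{jk}-\n_j h_{ik}=-\ov{\Rm}(\nu,e_k,e_i,e_j)$ and its covariant derivative, and the commutator of two covariant derivatives in the presence of extrinsic curvature. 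Because $N$ is Einstein, the zeroth order ambient contribution collapses to $\ov{\Rc}(\nu,\nu)\geq nc_1$, while $\bar\n\ov{\Rm}$ enters only through higher order channels.

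At a first point $(t_0,x_0)$ with $t_0>0$ at which $Q$ is about to become negative, one has $Q(t_0,x_0)=0$, $\n Q(t_0,x_0)=0$ and $\cL Q(t_0,x_0)\leq 0$. Substituting $\n Q=0$ eliminates $\n(\del_t F)$ in favour of $\n^2 F$ and $\n F$, and the resulting expression for $\cL Q$ decomposes into a non-negative quadratic form in $\n F$ produced from $\n^2 F$ and Cauchy--Schwarz in the metric $b$, a non-negative curvature term using $|A|^2\geq H^2/n$, $\ov{\Rm}(\cdot,\nu,\cdot,\nu)\geq c_1 g$ and $\ov{\Rc}(\nu,\nu)\geq nc_1$, and a bad remainder linear in $\bar\n\ov{\Rm}$. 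The pinching hypothesis \eqref{Main-A} is tuned so that Young's inequality absorbs this remainder into the non-negative pieces: one part of the error is absorbed by the gradient term, which requires $H\geq \fr{2np}{2p}\|\bar\n\ov{\Rm}\|$, and the other by the positive curvature term, which requires $H\geq \fr{2np}{1-p}\|\bar\n\ov{\Rm}\|$, the more restrictive of the two giving exactly $\fr{2np}{\min(1-p,2p)}\|\bar\n\ov{\Rm}\|$. The coefficient $\fr{p}{(p+1)t}$ in $Q$ is dictated by matching the self-similar solutions of the Euclidean $H^p$ flow (for which $F\sim t^{-p/(p+1)}$), and it is the unique choice for which the curvature, gradient and $t$-dependent contributions to $\cL Q$ balance after one uses $Q(t_0,x_0)=0$ to eliminate $\del_t F$.

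The main obstacle is the careful bookkeeping of the $\bar\n\ov{\Rm}$ contributions: in the locally symmetric ambient spaces of \cite{BIS4} these vanish identically, whereas here they arise from the interchange of covariant derivatives in $\n^2 F$, the evolution equation for $b$ and the differentiated Codazzi identity, and all must be packaged into the single bound \eqref{Main-A}. A secondary technical step is to show that the pinching \eqref{Main-A} is preserved along the flow, which uses the evolution equation of $H$ combined with the positive lower sectional bound $c_1$ to keep $H$ above the required threshold on the maximal existence interval; this also provides the short-time behaviour of $Q$ that starts the maximum principle argument.
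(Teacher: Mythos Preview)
Your approach is the direct one: compute the evolution of $Q$ in the standard parametrisation and apply the maximum principle. The paper takes a different route, using a tangential reparametrisation $\dot x=-H^p\nu-\grad_h H^p$ under which the normal is parallel and the Harnack quadratic collapses to $u=\del_t\log H^p$. The evolution of $u$ is then read off from a general formula established in \cite{BIS4}, and the argument reduces to verifying that a certain ambient-curvature remainder $\mathcal{R}$ (built from $\ov{\Rm}$ and $\bar\n\ov{\Rm}$ evaluated on $\dot x$, $\nu$ and tangent vectors) is non-negative under the hypothesis \eqref{Main-A}, after which one applies a general Harnack criterion (\cref{Master_Harnack}) with $\beta=0$. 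This sidesteps exactly the difficulties you would face head-on: the evolution of $b^{ij}$ and the time dependence of $\nabla$ are absorbed into the reparametrisation rather than computed term by term.

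Your sketch is plausible in outline but is too vague at two key points. First, the role of the Einstein condition is not merely to give $\ov{\Rc}(\nu,\nu)\geq nc_1$; its decisive contribution is to kill the cross term $\ov{\Rc}(V,\nu)$ with $V$ tangential (in your language, terms of the form $\tr_g\ov{\Rm}(\n H^p,\cdot,\nu,\cdot)$), which has no sign in a general positively curved background. In the paper this is what makes the quantity $S=d_{\cW}f(\La^{\sharp})/f$ depend only on $H$ and not on $\n H^p$; in the direct approach you would need the same cancellation somewhere, and your write-up does not identify it. Second, the $\min(1-p,2p)$ in the paper arises from decomposing $\|\dot x\|^2=f^2+\|V\|^2$ in the reparametrised flow: the term $(1-p)\ov{\Rm}(\dot x,\nu,\nu,\dot x)\geq (1-p)c_1\|V\|^2$ controls the tangential (gradient) part, while $\tfrac{2p}{H}h^{ij}\ov{\Rm}(e_i,\dot x,\dot x,e_j)\geq 2pc_1 f^2$ controls the normal part, and the single $\bar\n\ov{\Rm}$ error of size $\tfrac{2np}{H}\|\bar\n\ov{\Rm}\|\,\|\dot x\|^2$ is absorbed by the smaller of the two. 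Your assignment of one error to ``the gradient term'' and another to ``the positive curvature term'' via Young's inequality does not match this structure; there is a single error, not two, and no Young step. Finally, preservation of \eqref{Main-A} is not part of the theorem as stated --- the hypothesis is assumed along the whole solution --- so that step is extraneous here.
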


At a first glance, the assumption \eqref{Main-A} seems quite strong and rather technical. However, in general ambient spaces a differential Harnack inequality is unlikely to hold, since the property of a flow hypersurface to be of constant mean curvature is generally not preserved under the flow \eqref{Main-B}. Such a failure of preservation would violate comparison of the mean curvature at different space time points obtained by integrating the differential Harnack inequality over space-time paths. The interaction of the ambient curvature and the flow thus makes an assumption like \eqref{Main-A} essential. Also note that in order to obtain convergence results in general manifolds, similar assumptions had to be made in earlier works, e.g. \cite{Andrews:/1994a,Huisken:/1986}.

As to the sphere, originally treated in \cite{BryanIvaki:08/2015} for the mean curvature flow and in \cite{BIS1} for powers $0<p\leq 1$ of a convex curvature function, here we prove a first result of this kind for powers $p>1$ after assuming a certain kind of pinching condition. Furthermore we improve the result in \cite{BIS1} by obtaining a bonus term in case of a convex $F$ and $p=1$:

\begin{thm}\label{Sphere}
Let $N=\mathbb{S}^{n+1}$ and $F\in C^{\infty}(\Gamma_+)\cap C^0(\bar{\G}_+)$ be a strictly monotone, $1$-homogeneous and convex curvature function with $F(1,\dots,1)=n$.
\enum{
\item If
$1< p<\infty$ and if the flow is pinched in the sense that the flow hypersurfaces satisfy
\eq{\label{Sphere-A}\sum_{i=1}^{n-1}\fr{\del F}{\del\ka_{i}}(\ka_{1},\dots,\ka_{n})\ka_{i}\geq \fr{p-1}{p+1} \fr{\del F}{\del\ka_{n}}(\ka_{1},\dots,\ka_{n})\ka_{n}}
with ordered principal curvatures
\eq{\ka_{1}\leq \dots\leq\ka_{n},}
then along any strictly convex solution of 
\eq{\dot{x}=-F^p\nu}
the following Harnack inequality is valid:
 \eq{\del_t F^p-b(\n F^p,\n F^p)+\fr{p}{(p+1)t}F^p\geq 0.}
 \item Any strictly convex solution to
 \eq{\dot{x}=-F\nu}
 satisfies the Harnack inequality with bonus term
 \eq{\del_t F-b(\n F,\n F)-F(0,\dots,0,1)F+\fr{1}{2t}F\geq 0.}
 }
\end{thm}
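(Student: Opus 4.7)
The plan is to apply the parabolic maximum principle in the style of Hamilton--Chow--Andrews to the \emph{Harnack quadratic}
\[
Q := \partial_t F^p - b(\nabla F^p, \nabla F^p) + \frac{p}{(p+1)t} F^p,
\]
respectively to its variant with the bonus term in part (ii). Because $Q \to +\infty$ as $t \to 0^+$, it suffices to establish a parabolic differential inequality of the form $\mathcal{L} Q \geq -C(t)Q$ for the linearized flow operator $\mathcal{L} := \partial_t - p F^{p-1} F^{ij}\nabla_i\nabla_j$; the maximum principle then forces $Q \geq 0$ on $(0,T)\times M$.

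The first step is to record the evolution equations for $g_{ij}$, $h_{ij}$, $\nabla_k h_{ij}$, $F^p$, $\nabla F^p$ and $b$ under $\dot x = -F^p\nu$ in $\bbS^{n+1}$. Since $\bar\nabla\ov{\Rm} = 0$ on the sphere, no derivative-of-curvature terms appear, and all ambient contributions reduce to algebraic expressions in $h_{ij}$, $g_{ij}$, and $F^{ij}$. Applying $\mathcal L$ to $Q$ then yields three categories of contributions: (a) a second-order symmetric form involving $F^{ij,kl}\nabla_m h_{ij}\nabla^m h_{kl}$ which is non-negative by convexity of $F$; (b) mixed gradient terms linking $\nabla F^p$ with $\nabla b$ that can be completed to a non-negative square using $b = h^{-1}$ and the Codazzi identity; and (c) zeroth-order ambient curvature terms that carry the sign-sensitive combination of the eigenvalues of $F^{ij}$ against $h_{ij}$.

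Category (c) is where the pinching enters. A careful collection shows that the indefinite part of the zeroth-order expression takes the form of (a positive multiple of)
\[
\sum_{i=1}^n \frac{\partial F}{\partial\kappa_i}\kappa_i - \frac{p-1}{p+1}\,\frac{\partial F}{\partial\kappa_n}\kappa_n,
\]
which is exactly what the hypothesis \eqref{Sphere-A} renders non-negative when $p>1$, closing part (i). For part (ii) with $p=1$, the prefactor $(p-1)/(p+1)$ vanishes and no pinching is required; the bonus term $-F(0,\ldots,0,1)F$ then emerges from replacing the crude bound by the sharper convexity-based estimate $\sum_i \frac{\partial F}{\partial \kappa_i}\kappa_i \geq F(0,\ldots,0,1)\kappa_n$, which follows for any $1$-homogeneous convex $F$ from the subgradient inequality applied at the eigenvalue vector $(0,\ldots,0,1)$ together with Euler's identity $\sum_i \frac{\partial F}{\partial \kappa_i}\kappa_i = F$. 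I anticipate the main obstacle to be step two: computing $\mathcal{L} Q$ produces dozens of terms, and identifying the crucial combination in (c) requires repeated use of Euler's identity, the Codazzi equation, and the specific algebraic form of $\ov{\Rm}$ on $\bbS^{n+1}$. Once this combination is isolated, both statements follow by a standard maximum principle argument relying on the blow-up of the $\frac{p}{(p+1)t}F^p$ term as $t\to 0^+$.
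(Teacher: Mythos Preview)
Your overall strategy—maximum principle applied to a Harnack quantity, exploiting $\bar\nabla\ov{\Rm}=0$ on the sphere and the convexity of $F$—is sound, and a direct computation along these lines can be made to work. But the paper takes a cleaner and genuinely different route that avoids the tedious evolution of $b^{ij}$ and of the full quadratic $Q$ in the standard parametrisation. Following \cite{BIS4}, the authors pass to the tangential reparametrisation $\dot x=-f\nu-x_*(\mathrm{grad}_h f)$, under which the normal is parallel and the Harnack quadratic collapses to $u=\dot f/f$. The evolution of $u$ is already available in general backgrounds (their \Cref{Lu}), and a general comparison result (\Cref{Master_Harnack}) reduces everything to checking sign conditions on two curvature quantities $S$ and $\mathcal R$ built from $\ov{\Rm}$. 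On $\mathbb{S}^{n+1}$ one computes $S=pF^{p-1}F^{ij}g_{ij}\geq 0$ and shows $\mathcal R\geq 0$ via the pinching hypothesis; the Harnack then follows with $\beta=0$. For part~(ii) the bonus term is extracted not from the Euler sum but from the sharper bound $S=\sum_i \partial F/\partial\kappa_i\geq \kappa_n^{-1}F=F(\kappa_1/\kappa_n,\dots,1)\geq F(0,\dots,0,1)$, which allows the choice $\beta=F(0,\dots,0,1)$ in \Cref{Master_Harnack}. What your direct approach buys is self-containment; what the paper's approach buys is that the long computation is done once, abstractly, and the sphere case becomes a few lines of algebra.

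Two concrete inaccuracies in your sketch are worth flagging. First, the zeroth-order combination you isolate, $\sum_{i=1}^{n}\tfrac{\partial F}{\partial\kappa_i}\kappa_i-\tfrac{p-1}{p+1}\tfrac{\partial F}{\partial\kappa_n}\kappa_n$, is not the one that actually appears: the paper's computation of $\mathcal R$ leads to the eigenvalue-wise condition $(1+p)F/\kappa_j-2p\,\partial F/\partial\kappa_j\geq 0$, and it is in estimating this via Euler's identity that the hypothesis $\sum_{i=1}^{n-1}\tfrac{\partial F}{\partial\kappa_i}\kappa_i\geq\tfrac{p-1}{p+1}\tfrac{\partial F}{\partial\kappa_n}\kappa_n$ (sum to $n-1$, not $n$) together with the convexity ordering $\partial F/\partial\kappa_1\leq\dots\leq\partial F/\partial\kappa_n$ is invoked. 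Second, your derivation of the bonus term in (ii) via a subgradient inequality for the Euler sum is off: since $\sum_i(\partial F/\partial\kappa_i)\kappa_i=F$ identically, that route gives no new information; the correct quantity is the trace $\sum_i\partial F/\partial\kappa_i$, bounded below as above. Finally, the target inequality will not take the linear form $\mathcal L Q\geq -C(t)Q$: the essential feature is the quadratic term $\tfrac{p+1}{p}u^2$ in the evolution of $u$, which is precisely what forces the $-\tfrac{p}{(p+1)t}$ barrier, so your comparison argument must retain that nonlinearity.
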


\begin{rem}
The pinching condition \eqref{Sphere-A} is fulfilled for example when the flow hypersurfaces are pinched in the sense
\eq{\ka_{1}\geq \e\ka_{n},}
since in this case we have 
\eq{cg^{ij}\leq F^{ij}\leq Cg^{ij}}
for suitable $C>c>0$ and hence
\eq{\fr{\del F}{\del\ka_{i}}\geq \fr{c}{C}\fr{\del F}{\del\ka_{n}},}
leading to
\eq{\sum_{i=1}^{n-1}\fr{\del F}{\del\ka_{i}}\ka_{i}\geq (n-1)\fr{c}{C}\e\fr{\del F}{\del\ka_{n}}\ka_{n}.} Since $c/C\ra 1$ as $\e\ra 1$, condition \eqref{Sphere-A} can be satisfied in this situation.
However, \eqref{Sphere-A} also allows other conditions: For example suppose
\eq{F=\rt{n|A|^{2}},}
then
\eq{\label{A2}\fr{\del F}{\del\ka_{i}}=\fr{n\ka_{i}}{F}}
and hence
\eq{\sum_{i=1}^{n-1}\fr{\del F}{\del \ka_{i}}\ka_{i}\geq \fr{n\ka_{n-1}}{F}\ka_{n-1}\geq \e^{2}\fr{n\ka_{n}}{F}\ka_{n}=\e^{2}\fr{\del F}{\del\ka_{n}}\ka_{n},}
provided the solution is only $\e$-$(n-1)$-pinched, i.e.
\eq{\ka_{n-1}\geq \e\ka_{n}.}
With a suitable $\e$ condition, \eqref{Sphere-A} can be satisfied.
\end{rem}

In \cref{Ancient} we use this result to show that under the assumptions of \cref{Sphere} any ancient and strictly convex solution satisfying the pinching condition \eqref{Sphere-A} must a shrinking family of spheres.

Finally we obtain the following pointwise monotonicity estimate, if we drop the Einstein condition while still imposing local symmetry.

\begin{thm} \label{main_result2}
Let $n\geq 2$ and $(N^{n+1},\bar g)$ be a locally symmetric Riemannian manifold with non-negative sectional curvature. Then along any convex solution
of the mean curvature flow in $N$, for each $\xi\in M$
the quantity
\begin{equation}
u(\cdot,\xi)=\fr{\del_t H}{H}(\cdot,\xi)-\fr{1}{H}b(\n H(\cdot,\xi),\n H(\cdot,\xi))
\end{equation}
is non-decreasing.
\end{thm}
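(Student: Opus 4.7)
The target of \cref{main_result2} is a \emph{pointwise} monotonicity in $t$ of $u$ at each fixed Lagrangian label $\xi\in M$, so, unlike most differential Harnack results, the strategy is a direct computation of $\del_t u$ rather than a maximum principle applied to a parabolic evolution equation. Setting $Z := \del_t H - b^{ij}\n_i H\,\n_j H$ we have $u = Z/H$ and $\del_t u = (H\del_t Z - Z\del_t H)/H^2$, so the whole task reduces to computing $\del_t Z$ and comparing it with $\del_t H$.

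First I would assemble the standard evolution equations under the mean curvature flow in a general Riemannian ambient, namely
\eq{\del_t g_{ij} = -2Hh_{ij},\q \del_t h_{ij} = \n_i\n_j H - Hh_{ik}h_{j}{}^{k} + H\,\bar R_{\nu i\nu j},\q \del_t H = \D H + H|A|^2 + H\,\ov{\Rc}(\nu,\nu),}
together with the induced identities $\del_t b^{ij} = -b^{ik}b^{jl}\del_t h_{kl}$ and $\del_t\n_j H = \n_j\del_t H$. Forming $\del_t Z$ requires commuting $\del_t$ with $\n^2$ and applying a Simons-type identity $\n_i\n_j H = \D h_{ij} + \text{(curvature commutators)}$. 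The local symmetry hypothesis $\bar\n\,\ov{\Rm}=0$ then enters decisively, since every term thereby produced that involves a derivative of the ambient curvature tensor---whether from the Simons identity or from the spatial or temporal variation of $\ov{\Rc}(\nu,\nu)$ along the flow---vanishes.

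After substituting the simplified expressions, using Codazzi, and differentiating $b^{ik}h_{kj}=\de^i_j$ once and twice, I expect the combination $H\del_t Z - Z\del_t H$ to organise into an expression schematically of the form
\eq{H^2\del_t u \;=\; h_{kl}\,T^{k}T^{l} \;+\; H^2\,\bar R(X,\nu,X,\nu) \;+\; (\text{analogous non-negative terms}),}
where $T$ and $X$ are tensors built algebraically from $H$, $\n H$, $h$ and $b$; convexity ($h_{ij}>0$) controls the first type of term, and non-negative sectional curvature of $N$ controls the second, yielding $\del_t u \ge 0$ pointwise.

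The main technical obstacle I anticipate is exactly this algebraic organisation step: a priori $H\del_t Z - Z\del_t H$ is a dense linear combination of contractions involving $\n^2 H$, $\n h$, $b$, $\ov{\Rm}$ and their products, and isolating a manifestly non-negative sum-of-squares structure requires systematic bookkeeping with the three tools just mentioned (Codazzi, the derivatives of $b^{ik}h_{kj}=\de^i_j$, and repeated applications of $\bar\n\,\ov{\Rm}=0$). Compared with the locally symmetric Einstein computation in \cite{BIS4}, dropping the Einstein hypothesis should cost exactly the $1/(2t)$-type offset present in the standard Harnack quadratic, which is why only plain monotonicity of $u$ (rather than the stronger non-negativity of a Harnack quadratic) can be expected in this generality.
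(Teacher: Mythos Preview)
Your strategy has a genuine gap. You propose to compute $\del_t u$ directly and organise $H\del_t Z - Z\del_t H$ into a manifestly non-negative pointwise expression, bypassing the maximum principle. This cannot succeed: since $\del_t H = \Delta H + H|A|^2 + H\,\ov{\Rc}(\nu,\nu)$, the quantity $\del_t^{2} H$ appearing in $\del_t Z$ contains $\Delta(\del_t H)$ as its leading piece, so $H\del_t Z - Z\del_t H$ carries an irreducible second-order spatial derivative of $Z$ (equivalently of $u$). The Simons identity and Codazzi rearrange curvature and $\n h$ terms, and $\bar\n\,\ov{\Rm}=0$ kills ambient-derivative terms, but none of this removes the elliptic piece; the evolution of $u$ is genuinely parabolic, not an ODE in $t$ along each fibre $\{\xi\}\times[0,T)$. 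The ``sum of squares'' structure you anticipate does appear, but only on the right-hand side of a parabolic inequality for $u$, not in $\del_t u$ itself.

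The paper proceeds exactly along those parabolic lines, but economises by passing to the tangentially reparametrised flow \eqref{BIS-Flow}, in which the Harnack quadratic becomes simply $u=\dot f/f$, and then invoking the general evolution equation of \Cref{Lu}. With $f=H$ and $p=1$, local symmetry kills the $\bar\n\,\ov{\Rm}$ terms, the coefficient $1-d_{\cW}f(\cW)/f$ and the commutator $d_{\cW}f(A\circ\La^{\sharp}-\La^{\sharp}\circ A)$ both vanish, and \eqref{Lu-1} collapses to
\[
Lu \;\ge\; 2\Bigl(u-\tfrac{d_{\cW} f(\La^{\sharp})}{f}\Bigr)^{2} + \tfrac{2}{H}\,h^{ij}\,\ov{\Rm}(\n_i x,\dot x,\dot x,\n_j x) \;\ge\; 0,
\]
the last inequality by convexity and non-negative sectional curvature. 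The maximum principle then gives the monotonicity. If you carried your direct computation through correctly in the standard parametrisation you would arrive at an equivalent parabolic inequality, only with substantially more bookkeeping than the reparametrisation affords; the conclusion would still rest on the maximum principle, not on a pointwise ODE argument.
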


This estimate is not as strong as a classical differential Harnack inequality, because the lower bound one may deduce for $u$ depends on its initial value, while the classical Harnack is independent of any initial values. However, the statement in \cref{main_result2} gives a pointwise estimate which is stronger than the estimate one gets for $\min H$ from the standard evolution equation
\eq{\label{Ev-H}\del_t H=\D H+\|A\|^2H+\tr\br{\ov{\Rm}(\cdot,\nu,\nu,\cdot)H}.
}

To begin, in Section \ref{sec:Main-Proof} we use  the quite general framework from \cite{BIS4}
to state a key evolution equation derived therein and adapt some estimates. The sphere case is then treated
in \Cref{spherical} and the Einstein case in \Cref{Einstein}. In \Cref{ancient} we present some applications to ancient solutions. Lastly in \Cref{section_lower_bound} we derive
the monotonicity of the smallest principal curvature under the flow by adapting the
corresponding Euclidean case from \cite{Schulze:/2005} to the Riemannian manifold setting.

\section{Evolution of the Harnack quadratic}\label{sec:Main-Proof}

As explained in the introduction, we are interested in the derivation of Harnack inequalities for curvature flows of the form
\begin{equation}
\dot{x}=-f\nu,
\end{equation}
where $f=f(h^i_j)$ is a curvature function. In most of the previous works on this topic (except for \cite{Andrews:09/1994,BIS4}) for this purpose the evolution of the so-called {\it{Harnack quadratic}}
\begin{equation}
Q=\fr{1}{f}\br{\dot{f}-b^{ij}\n_i f\n_j f} 
\end{equation}
was studied. Directly studying $Q$ is a tedious project, since the evolution of $b^{ij}$ is complicated, especially in general backgrounds, and $\n$ is time dependent, cf. \cite{BIS1,Chow:06/1991,Hamilton:/1995,Li:/2011,Smoczyk:/1997,Wang:11/2007}. Inspired by Andrews' use of the Gauss map parametrisation of convex hypersurfaces in the Euclidean space \cite{Andrews:09/1994}, in \cite{BIS4} a tangential reparametrisation of the flow,  mimicking the behavior of the Gauss map, was used to circumvent these difficulties in general backgrounds. Namely, under the flow
\begin{equation}\label{BIS-Flow}
\dot{x}= -f\nu-x_{\ast}(\grad_h f),
\end{equation}
where
\eq{h(\grad_{h}f,X)=df(X)\q\fa X\in TM,}
it can easily be calculated that the normal is parallel,
\begin{equation}
\frac{\bar\n}{dt}\nu =0.
\end{equation}
This leads to the additional nice property, that the Harnack quadratic simplifies to $\del_t{\log f}$ in the new parametrisation, making computations a lot easier. In \cite{BIS4} this was the gateway to obtain the evolution of the Harnack quadratic in {\bf{general Riemannian and Lorentzian manifolds}}. Let us fix some more specific notation and conventions before we state this evolution equation.

We will usually omit the mapping $x_{*}$ when it is clear that a tangent vector $X\in TM$ has to be understood as its push-forward $x_{*}(X)$ to $TN$.
Define
\begin{equation}
u=\frac{\dot{f}}{f},
\end{equation}
compare \cite{Andrews:09/1994,BIS4,LiYau:/1986}. 
Let $\tilde \nabla$ denote the Levi-Civita connection of
\eq{\tilde g=\frac{h}{f}.}
Referring to \eqref{f(g,h)}, we will denote by $d_{\cW}f=(f^i_j)$ the derivative of $f$ with respect to $\cW$ and by $d_hf=(f^{ij})$ we denote the partial derivative of $f$ when understood as a function of the pair $(g,h)$. Note that 
\begin{equation}
f^{ij}=g^{ik}f_{k}^j
\end{equation}
in a local frame, compare for example \cite{Gerhardt:/2006} or \cite[Prop.~3.3]{Scheuer:06/2018}.

Define $A\in T^{1,1}(M)$ by 
\begin{equation}
 x_{*}(A(X))=-\bar \nabla_X\dot x,
\end{equation}
where we note that $\bar\n_X \dot{x}$ is tangential \cite[equ.~(3.2)]{BIS4},
and define the $T^{0,2}$-tensor $\Lambda$
\begin{equation}
 \Lambda(X,Y)=\overline{\Rm}(\dot x, x_{*}X, \nu, x_{*}Y)
\end{equation}
for $X,Y \in TM$ and its associated representation $\Lambda^{\sharp}$ as $T^{1,1}$-tensor by raising its last index with the 
metric $g$. Then the evolution of the Weingarten tensor $\cW$ is given by
\eq{\label{eq:dotW}\dot\cW=A\circ\cW+\La^{\sharp},}
cf. \cite[equ.~(3.6)]{BIS4}.
The evolution equation from \cite[Lemma 3.7]{BIS4}
for $u$ adapted to our setting can be stated as
\begin{equation} \label{main_equation_harnack}
\begin{aligned}
 Lu :=& \dot u -d_hf(\tilde \nabla^2 u)
 -d_hf(\tilde g(D_{(\cdot)}\grade_{\tilde g}u, \cdot))
 +\frac{1}{f}d_hf(\overline{\Rm}(\grade_{\tilde g}u, \cdot, \nu, \cdot))\\
 =& 
 \frac{1}{f}d_{\cW}^2f(\dot \cW, \dot \cW)+\frac{2}{f}
 d_hf ( h ( A ( \cdot ), A ( \cdot ) ) ) + \frac{2}{f}d_{\cW}f(A\circ\Lambda^{\sharp}-\Lambda^{\sharp}\circ A) \\
 &+ \left(1-\frac{d_{\cW}f(\cW)}{f}\right)
 \overline{\Rm}(\dot x, \nu, \nu, \dot x) + \frac{2}{f}d_hf(\overline{\Rm}(\cdot, \dot x, \dot x, \cW(\cdot))) \\
 &+ \frac{1}{f}d_hf(\bar \nabla \overline{\Rm}(\dot x, \cdot, \nu,\cdot, \dot x)+\bar \nabla \overline{\Rm}(\dot x, \cdot, \nu, 
 \dot x, \cdot)),
\end{aligned}
\end{equation}
where $D$ is the difference tensor
\eq{D_{X}Y=\hat\n_{X}Y-\ti\n_{X}Y}
and $\hat\n$ is defined via the decomposition
\eq{\bar\n_{X}Y=x_{\ast}(\hat\n_{X}Y)+\ti g(X,Y)\dot x,}
see \cite[p.~19]{BIS4}.

Following \cite{BIS4}  we want to estimate $u$ from below 
by using (\ref{main_equation_harnack}) and the parabolic maximum principle
in order to prove our main result. 
Therefore we need to deal with the first two terms on the right hand side of \eqref{main_equation_harnack}. We assume 
\begin{equation} 
f=F^p,\quad p>0,
\end{equation}
with a 1-homogeneous, strictly monotone and convex curvature function $F\in C^{\infty}(\Gamma_{+})$, where $F=H$ if $N$ is not a spaceform.

\begin{lemma}\label{Lu}
 Suppose that
 \begin{enumerate}
 \item[(i)] $F\in C^{\8}(\G_{+})$ is 1-homogeneous, strictly monotone, convex and satisfies $F(1,\dots,1)=n$, if $N$ is a spaceform and
 \item[(ii)] $F=H$, otherwise.
 \end{enumerate}
Then along any strictly convex solution to \eqref{BIS-Flow} with $f=F^p$, $0<p<\8$, we have
 \begin{equation}\label{Lu-1}
\begin{aligned}
 Lu \ge& \frac{p+1}{p}u^2-\frac{4}{p}\frac{d_{\cW}f(\Lambda^{\sharp})}{f}u + \frac{2}{p}\left(\frac{d_{\cW}f(\Lambda^{\sharp})}{f}\right)^2 \\
 	&+\frac{2}{f}d_{\cW}f(A\circ\Lambda^{\sharp}-\Lambda^{\sharp}\circ A)+ \left(1-\frac{d_{\cW}f(\cW)}{f}\right)
 \overline{\Rm}(\dot x, \nu, \nu, \dot x) \\
 	&+ \frac{2}{f}d_hf(\overline{\Rm}(\cdot, \dot x, \dot x, \cW(\cdot))) \\
    &+ \frac{1}{f}d_hf(\bar \nabla \overline{\Rm}(\dot x, \cdot, \nu, \cdot, \dot x)+\bar \nabla \overline{\Rm}(\dot x, \cdot, \nu, 
 \dot x, \cdot)). 
\end{aligned}
 \end{equation}
 \end{lemma}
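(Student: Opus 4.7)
The plan is to bound from below the two terms on the right-hand side of \eqref{main_equation_harnack} that are not already present on the right-hand side of \eqref{Lu-1}, namely $\frac{1}{f}d_{\cW}^{2}f(\dot\cW,\dot\cW)$ and $\frac{2}{f}d_{h}f(h(A(\cdot),A(\cdot)))$, by the target quadratic
\[
 \frac{p+1}{p}u^{2} - \frac{4}{p}\frac{d_{\cW}f(\La^{\sharp})}{f}u + \frac{2}{p}\left(\frac{d_{\cW}f(\La^{\sharp})}{f}\right)^{2}.
\]
All five remaining terms in \eqref{main_equation_harnack} appear unchanged in \eqref{Lu-1}, so this reduction is enough.

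For the first term I would substitute $f=F^{p}$ and apply the chain rule,
\[
 d_{\cW}^{2}f = p(p-1)F^{p-2}\,d_{\cW}F\otimes d_{\cW}F + pF^{p-1}\,d_{\cW}^{2}F.
\]
From $u=\dot f/f = p\dot F/F$ one reads off $d_{\cW}F(\dot\cW)=\dot F=uF/p$, while convexity of $F$ yields $d_{\cW}^{2}F(\dot\cW,\dot\cW)\geq 0$. Substituting gives
\[
 \frac{1}{f}d_{\cW}^{2}f(\dot\cW,\dot\cW) \;=\; \frac{p-1}{p}u^{2} + \frac{p}{F}d_{\cW}^{2}F(\dot\cW,\dot\cW) \;\geq\; \frac{p-1}{p}u^{2}.
\]

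For the second term I would work in a $g$-orthonormal frame $(e_{i})$ diagonalising $\cW$, with eigenvalues $0<\ka_{1}\leq\cdots\leq\ka_{n}$, and write $F_{i}=\del F/\del\ka_{i}>0$ (strict monotonicity) and $f_{i}=pF^{p-1}F_{i}$. In this frame,
\[
 \frac{d_{\cW}f(A\circ\cW)}{f} = \frac{p}{F}\sum_{i}F_{i}\ka_{i}A^{i}_{i},\qquad \frac{1}{f}d_{h}f(h(A(\cdot),A(\cdot))) = \frac{p}{F}\sum_{i,k}F_{i}\ka_{k}(A^{k}_{i})^{2}.
\]
Cauchy--Schwarz combined with Euler's relation $\sum_{i}F_{i}\ka_{i}=F$ (from 1-homogeneity of $F$) yields
\[
 \left(\sum_{i}F_{i}\ka_{i}A^{i}_{i}\right)^{2} \leq \left(\sum_{i}F_{i}\ka_{i}\right)\left(\sum_{i}F_{i}\ka_{i}(A^{i}_{i})^{2}\right) \leq F\sum_{i,k}F_{i}\ka_{k}(A^{k}_{i})^{2},
\]
where the last step keeps only the diagonal $k=i$ terms on the right and discards the remaining non-negative summands (using $F_{i},\ka_{k}\geq 0$). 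Hence
\[
 \frac{2}{f}d_{h}f(h(A(\cdot),A(\cdot))) \;\geq\; \frac{2}{p}\left(\frac{d_{\cW}f(A\circ\cW)}{f}\right)^{2}.
\]

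To conclude, I would use \eqref{eq:dotW} together with $d_{\cW}f(\dot\cW)=\dot f = uf$ to write $d_{\cW}f(A\circ\cW)/f = u - d_{\cW}f(\La^{\sharp})/f$; then adding the two lower bounds and invoking the purely algebraic identity
\[
 \frac{p-1}{p}u^{2} + \frac{2}{p}(u-v)^{2} = \frac{p+1}{p}u^{2} - \frac{4}{p}vu + \frac{2}{p}v^{2},\qquad v:=\frac{d_{\cW}f(\La^{\sharp})}{f},
\]
produces exactly the target quadratic. The main subtlety is the Cauchy--Schwarz step: the operator $A$ need not be self-adjoint or commute with $\cW$, but in a frame diagonalising $\cW$ only the diagonal entries $A^{i}_{i}$ enter $d_{\cW}f(A\circ\cW)$, which is precisely what allows the reduction to diagonal summands and closes the estimate.
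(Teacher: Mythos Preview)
Your proof is correct and follows essentially the same route as the paper: you isolate the two terms $\tfrac{1}{f}d_{\cW}^{2}f(\dot\cW,\dot\cW)$ and $\tfrac{2}{f}d_{h}f(h(A(\cdot),A(\cdot)))$, bound the first by $\tfrac{p-1}{p}u^{2}$ via the chain rule and convexity of $F$, bound the second by $\tfrac{2}{p}\bigl(d_{\cW}f(A\circ\cW)/f\bigr)^{2}$, and then use \eqref{eq:dotW} together with $d_{\cW}f(\dot\cW)=uf$ to assemble the quadratic in $u$ and $v=d_{\cW}f(\La^{\sharp})/f$. The only substantive difference is in the second step: the paper invokes the abstract inequality $d_{\cW}f(\ad(\cW\circ A)\circ\cW^{-1}\circ\cW\circ A)\geq \tfrac{1}{p}f^{-1}\bigl(d_{\cW}f(\cW\circ A)\bigr)^{2}$ from \cite[Remark~2.10]{BIS4}, whereas you reprove this concretely in a frame diagonalising $\cW$ via the weighted Cauchy--Schwarz inequality $(\sum_{i}F_{i}\ka_{i}A^{i}_{i})^{2}\leq(\sum_{i}F_{i}\ka_{i})(\sum_{i}F_{i}\ka_{i}(A^{i}_{i})^{2})$ and Euler's relation $\sum_{i}F_{i}\ka_{i}=F$, then discard the non-negative off-diagonal summands; this makes the argument self-contained and transparent about exactly which structural properties of $F$ (strict monotonicity, $1$-homogeneity) are used.
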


\begin{proof} 
For completeness we present the proof from \cite{BIS4} with the necessary adaptions.
We prepare some auxiliary estimates for terms appearing on the right-hand side
of (\ref{main_equation_harnack}). Referring to \cite[Theorem 2.3]{BIS4}, there exists a linear map, denoted $f'(\cW)$ commuting with $\cW$ and such that for any $B$,
\[
d_{\cW} f (B) = \Tr (f'(\cW) \circ B).
\]
Then commuting \(f(\cW)\) with \(\cW\) and using that the trace is invariant under cyclic permutation we have
\[
\begin{split}
d_{\cW} f (\cW\circ A) &= \Tr (f'(\cW) \circ \cW \circ A) = \Tr (\cW \circ f'(\cW) \circ A) \\
&= \Tr (f'(\cW) \circ A \circ \cW) = d_{\cW} f (A \circ \cW).
\end{split}
\]
Using this, that \(\cW\) is self-adjoint so that \(\ad(\cW) = \cW\), the fact that (see \cite[Remark 2.10]{BIS4})
\[
d_{\cW}f(\ad(\cW\circ A)\circ \cW^{-1}\circ \cW\circ A) \ge \frac{1}{p}f^{-1}(d_{\cW}f(\cW\circ A))^2,
\]
and the evolution of \(\cW\) from equation \eqref{eq:dotW}, we estimate
\begin{equation}
\begin{aligned}
 d_hf(h(A(\cdot), A(\cdot))) =& 
 d_hf(g(A(\cdot), \cW \circ A(\cdot))) \\
=& d_hf(g(\cdot, \ad(A)\circ \cW \circ A(\cdot)))\\
 =& d_{\cW}f(\ad(A)\circ \cW\circ A)\\
 =& d_{\cW}f(\ad(\cW\circ A)\circ \cW^{-1}\circ \cW\circ A)\\
 \ge& \frac{1}{p}f^{-1}(d_{\cW}f(\cW\circ A))^2 \\
 =& \frac{1}{p}f^{-1}\br{d_{\cW}f(\Lambda^{\sharp}-\dot \cW)}^2 \\
 =& \frac{1}{pf}\br{\br{d_{\cW}f(\dot \cW)}^2-2d_{\cW}f(\dot \cW)d_{\cW}f(\Lambda^{\sharp})+\br{d_{\cW}f(\Lambda^{\sharp})}^2}.
\end{aligned}
 \end{equation}
For the second derivative term, in the case $F=H$ and \(f = H^p\), by \cite[(2.17)]{Scheuer:06/2018} we see that when \(p = 1\),
\begin{equation}
 d_{\cW}^2f(\dot \cW, \dot \cW)=0
 \end{equation}
For general \(p\) we have
\begin{equation}
 d_{\cW}f(\dot \cW) = pH^{p-1}\tr(\dot \cW)
\end{equation}
and 
\begin{equation}
\begin{aligned}
 d_{\cW}^2f(\dot \cW, \dot \cW)&=p(p-1)H^{p-2}\br{\tr(\dot \cW)}^2 \\
 &= \frac{p-1}{p}f^{-1}\br{d_{\cW}f(\dot \cW)}^2.
\end{aligned}
 \end{equation}
In the case that $F$ is convex and $f=F^p$ we obtain from \cite[Lemma~5.1]{BIS4} that
\begin{equation}
d_{\cW}^2f(\dot \cW,\dot \cW)\geq \frac{p-1}{p}f^{-1}\br{d_{\cW}f(\dot \cW)}^2
\end{equation}
In all cases, together with $\dot f= fu=d_{\cW}f(\dot \cW)$, we conclude
 \begin{equation}
  \begin{aligned}
   & \frac{2}{f}d_hf(h(A(\cdot), A(\cdot)))+\frac{1}{f}d_{\cW}^2f(\dot \cW, \dot \cW) \\
\ge& \frac{p+1}{p}f^{-2}\br{d_{\cW}f(\dot \cW)}^2-\frac{4}{p}f^{-2}d_{\cW}f(\dot \cW)d_{\cW}f(\Lambda^{\sharp})+\frac{2}{p}f^{-2}\br{d_{\cW}f(\Lambda^{\sharp})}^2\\
=& \frac{p+1}{p}u^2-\frac{4}{p}\frac{d_{\cW}f(\Lambda^{\sharp})}{f}u + \frac{2}{p}\left(\frac{d_{\cW}f(\Lambda^{\sharp})}{f}\right)^2.
   \end{aligned}
 \end{equation}
 Inserting this into \eqref{main_equation_harnack} gives the result.
\end{proof}

For convenience of notation, let
\begin{equation}
\label{eq:S}
S = \frac{d_{\cW}f(\Lambda^{\sharp})}{f}
\end{equation}
and
\begin{equation}
\label{eq:R}
\begin{aligned}
\mathcal{R} &= \frac{2}{f}d_{\cW}f(A\circ\Lambda^{\sharp}-\Lambda^{\sharp}\circ A)+ \left(1-\frac{d_{\cW}f(\cW)}{f}\right) \overline{\Rm}(\dot x, \nu, \nu, \dot x) \\
&+ \frac{2}{f}d_hf(\overline{\Rm}(\cdot, \dot x, \dot x, \cW(\cdot))) \\
&+ \frac{1}{f}d_hf(\bar \nabla \overline{\Rm}(\dot x, \cdot, \nu, \cdot, \dot x)+\bar \nabla \overline{\Rm}(\dot x, \cdot, \nu, \dot x, \cdot)).
\end{aligned}
\end{equation}

Using the evolution equation satisfied by $u$, we show general conditions, under which we can obtain a Harnack inequality. 

\begin{thm}\label{Master_Harnack}
Let $F\in C^{\8}(\G_{+})$ be 1-homogeneous, strictly monotone and convex with $F(1,\dots,1)=n$ if $N$ is a spaceform and $F=H$ otherwise. Let $x$ be a strictly convex solution to \eqref{BIS-Flow} with $f=F^{p}$, $0<p<\8$.
Let \(\beta\in\R\) satisfy
\begin{equation}
\begin{cases}
\beta &\leq \frac{2}{p + 1} S \\
\beta &\notin (\beta_-, \beta_+)
\end{cases}
\end{equation}
throughout $(0,T)\x M$,
where \(\beta_{\pm}(t,\xi)\) are the roots of
\begin{equation}
p(\beta) = \frac{p+1}{p} \beta^2 - \frac{4}{p} S \beta + \frac{2}{p} S^2 + \mathcal{R},
\end{equation}
and where we take it that the second condition, \(\beta \notin (\beta_-, \beta_+)\) is satisfied for all \(\beta\) in case there is at most one root.
Then
\begin{equation}
u - \beta + \frac{p}{p+1} \frac{1}{t} \geq 0.
\end{equation}
Equivalently, transforming back to the standard parametrisation,
\begin{equation}
\partial_t F^p - b(\nabla F^p, \nabla F^p) - \beta F^p + \frac{p}{p+1}\frac{1}{t} F^p \geq 0.
\end{equation}
\end{thm}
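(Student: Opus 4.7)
The plan is to apply the parabolic maximum principle to the quantity
$v := u - \beta + \frac{p}{(p+1)t}$ on $(0,T) \times M$. Since $\beta$ is a real
constant and $\frac{p}{(p+1)t}$ depends only on $t$, the spatial derivatives of
$v$ coincide with those of $u$, so the parabolic operator $L$ of
\eqref{main_equation_harnack} satisfies $Lv = Lu - \frac{p}{(p+1)t^{2}}$.

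Combining this with the lower bound of \Cref{Lu} and substituting
$u = v + \beta - \frac{p}{(p+1)t}$ into the right-hand side, I would expand in
powers of $v$. The central algebraic step is to verify that the $v$-independent
remainder telescopes: the $\frac{p}{(p+1)t^{2}}$ produced by the square in the
$u^{2}$-term exactly cancels the $-\frac{p}{(p+1)t^{2}}$ coming from $Lv$, leaving
\[
Lv \;\geq\; \frac{p+1}{p}\,v^{2} \;+\; C(t,\xi)\,v \;+\; p(\beta) + \frac{2}{t}\Big(\tfrac{2S}{p+1}-\beta\Big)
\]
for some coefficient $C(t,\xi)$. The two hypotheses enter precisely at this
point: $\beta \leq \tfrac{2S}{p+1}$ makes the explicit $\tfrac{1}{t}$-term
nonnegative, and since $\tfrac{2S}{p+1}$ is the vertex of the upward parabola
$p(\cdot)$, the assumption $\beta \notin (\beta_{-},\beta_{+})$ together with
$\beta \leq \tfrac{2S}{p+1}$ forces $\beta \leq \beta_{-}$, whence
$p(\beta) \geq 0$. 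The degenerate cases in which $p$ has at most one real root
are automatic.

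With this differential inequality in hand, the conclusion follows from a
standard maximum principle argument. Smoothness of the flow and compactness of
$M$ bound $u$ on every compact subinterval of $(0,T)$, so $v \to +\infty$ as
$t \to 0^{+}$. Assuming for contradiction that $v$ is not globally nonnegative,
I would perturb by setting $\beta_{\eta} := \beta - \eta$ for small $\eta > 0$
and $v_{\eta} := v + \eta = u - \beta_{\eta} + \frac{p}{(p+1)t}$. The hypotheses
still hold with $\beta_{\eta}$, and the $v_{\eta}$-free remainder now satisfies
$p(\beta_{\eta}) + \frac{2}{t}(\tfrac{2S}{p+1}-\beta_{\eta}) \geq \frac{2\eta}{t} > 0$.
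At a first vanishing point $(t_{\ast},\xi_{\ast})$ of $v_{\eta}$,
positive-definiteness of $d_{h}f$ (strict monotonicity of $F$) together with
$\tilde\nabla v_{\eta}(t_{\ast},\xi_{\ast}) = 0$ and
$\tilde\nabla^{2} v_{\eta}(t_{\ast},\xi_{\ast}) \geq 0$ forces
$Lv_{\eta}(t_{\ast},\xi_{\ast}) \leq 0$, while the differential inequality
forces $Lv_{\eta}(t_{\ast},\xi_{\ast}) \geq \tfrac{2\eta}{t_{\ast}} > 0$, a
contradiction. Sending $\eta \to 0$ gives $v \geq 0$, and the second,
standard-parametrisation form of the conclusion follows from the fact that
$u = \dot f/f$ in the BIS parametrisation equals
$\big(\partial_{t}F^{p} - b(\nabla F^{p},\nabla F^{p})\big)/F^{p}$ in the
standard one. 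The main obstacle is the algebraic bookkeeping leading to the
displayed inequality: the two hypotheses are calibrated exactly to the
$v$-free remainder $p(\beta) + \tfrac{2}{t}(\tfrac{2S}{p+1}-\beta)$, and any
miscalculation in completing the square would produce artificial or mismatched
conditions on $\beta$.
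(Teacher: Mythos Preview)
Your argument is correct and follows the same route as the paper: both set up the comparison function $\eta(t)=\beta-\tfrac{p}{(p+1)t}$ and reduce the problem to verifying that the $v$-free remainder $p(\beta)+\tfrac{2}{t}\bigl(\tfrac{2S}{p+1}-\beta\bigr)$ is nonnegative under the stated hypotheses, then invoke the maximum principle using that $\eta(0)=-\infty$. The only cosmetic difference is that the paper compares $u$ directly to $\eta$ via the inequality $Lu-\tfrac{p+1}{p}u^{2}+\tfrac{4}{p}Su\geq L\eta-\tfrac{p+1}{p}\eta^{2}+\tfrac{4}{p}S\eta$, whereas you expand everything in terms of $v=u-\eta$; your explicit perturbation $\beta\mapsto\beta-\eta$ is a welcome elaboration of the one-line maximum-principle appeal in the paper.
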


\begin{proof}
We write the estimate \eqref{Lu-1} with the help of \eqref{eq:S} and \eqref{eq:R} more compactly as
\begin{equation}
\label{eq:Lu}
Lu - \frac{p+1}{p}u^2 + \frac{4}{p} S u \geq \frac{2}{p} S^2 + \mathcal{R}.
\end{equation}
For any \(\beta \in \R\) consider the function
\begin{equation}
\eta(t) = -\frac{p}{p+1} \frac{1}{t} + \beta.
\end{equation}
We seek conditions on \(\beta\) so that
\begin{equation}
Lu - \frac{p+1}{p}u^2 + \frac{4}{p} S u \geq L\eta - \frac{p+1}{p}\eta^2 + \frac{4}{p} S \eta.
\end{equation}
By equation \eqref{eq:Lu}, it's sufficient that
\begin{equation}
L\eta - \frac{p+1}{p}\eta^2 + \frac{4}{p} S \eta \leq \frac{2}{p} S^2 + \mathcal{R}.
\end{equation}
Since \(\eta\) depends only on \(t\), we have \(L \eta = \partial_t \eta\) and hence
\begin{equation}
L \eta - \frac{p+1}{p}\eta^2 + \frac{4}{p} S \eta = 2\left(\beta - \frac{2}{p+1} S\right) \frac{1}{t} + \frac{4}{p} S \beta - \frac{p+1}{p} \beta^2.
\end{equation}
Then
\begin{equation}
L\eta - \frac{p+1}{p}\eta^2 + \frac{4}{p} S \eta \leq \frac{2}{p} S^2 + \mathcal{R}
\end{equation}
if and only if
\begin{equation}
2\left(\beta - \frac{2}{p+1} S\right) \frac{1}{t} + \frac{4}{p} S \beta - \frac{p+1}{p} \beta^2 \leq \frac{2}{p} S^2 + \mathcal{R}.
\end{equation}
That is,
\begin{equation}
2\left(\frac{2}{p+1} S - \beta\right) \frac{1}{t} + \frac{p+1}{p} \beta^2 - \frac{4}{p} S \beta + \frac{2}{p} S^2 + \mathcal{R} \geq 0.
\end{equation}
Since we require this inequality to hold for all \(t > 0\), the coefficient of the \(1/t\) term must be non-negative. For the remainder, at each $(t,\xi)\in (0,T)\x M$ let \(\beta_{\pm}\) denote the roots of
\begin{equation}
p(\beta) = \frac{p+1}{p} \beta^2 - \frac{4}{p} S \beta + \frac{2}{p} S^2 + \mathcal{R},
\end{equation}
given by
\eq{
\beta_{\pm}= \frac{2}{p+1} \left(S \pm \sqrt{\frac{1-p}{2} S^2 - \frac{p(p+1)}{4} \mathcal{R}}\right).
}
The conditions on \(\beta\) are then
\begin{equation}
\begin{cases}
\beta &\leq \frac{2S}{p + 1} \\
\beta &\notin (\beta_-, \beta_+)
\end{cases}
\end{equation}
where we take it that the second condition is satisfied for all \(\beta\) in case there is at most one root, since then \(p(\beta) \geq 0\) for all \(\beta\).

To summarise then, for \(\beta\) as in the hypothesis of the theorem we have
\begin{equation}
Lu - \frac{p+1}{p}u^2 + \frac{4}{p} S u \geq L\eta - \frac{p+1}{p}\eta^2 + \frac{4}{p} S \eta
\end{equation}
and \(\eta(0) = -\infty\). Then the maximum principle implies \(u \geq \eta\) for all \(t > 0\) which is precisely the required conclusion.
\end{proof}

\begin{rem}
\label{rem:harnack}
Note that if \(S\) and \(\be_{-}\) are bounded below, then we may take
\begin{equation}
\beta \leq \min\left\{\frac{2S}{p+1}, \beta_-\right\}
\end{equation}
and obtain a Harnack inequality. However, in this case \(\beta\) may be negative and the Harnack is weaker than typically obtained. 

In the case that \(\mathcal{R} \geq 0\), we may discard that term and be a little more explicit replacing \(\beta_{\pm}\) with the roots of \(\frac{p+1}{p} \beta^2 - \frac{4}{p} S \beta + \frac{2}{p} S^2\):
\begin{equation}
\beta_{\pm} = \frac{2}{p+1} \left(S \pm \sqrt{\frac{1-p}{2}}|S|\right).
\end{equation}
In particular, if \(p > 1\), the roots are complex and any 
\eq{\beta \leq \tfrac{2S}{p+1}} suffices. 
If \(0 < p \leq 1\), then 
\eq{\beta_- \leq \tfrac{2S}{p+1} \leq \beta_+}
 and so the only option is to take \(\beta \leq \beta_-\). If \(S \leq 0\), then \(\beta_- \leq 0\) and we get a weak Harnack. If on the other hand, \(S \geq 0\), then
\begin{equation}
0 \leq \beta_- = \frac{2}{p+1}\left(1 - \sqrt{\frac{1-p}{2}}\right) S \leq \frac{2S}{p+1}
\end{equation}
and the Harnack includes a good non-negative \emph{bonus} term that is strictly positive when \(S \geq c> 0\).
\end{rem}

\section{The spherical case} \label{spherical}

We use \cref{Master_Harnack} and \cref{rem:harnack} to prove \Cref{Sphere}.

On the sphere,
\[
\ov{\Rm} (X, Y, Z, W) = \ov{g}(X, W) \ov{g} (Y, Z) - \ov{g}(X, Z)\ov{g} (Y, W)
\]
and hence
\eq{S=\fr{d_{\cW}f(\La^{\sharp})}{f}=\fr{p}{F}F^{ij}\ov{\Rm}(\dot x,\n_{i}x,\nu,\n_{j}x)=-\fr{p}{F}F^{ij}\bar g(\dot x,\nu)g_{ij}=pF^{p-1}F^{ij}g_{ij}\geq 0,}
According to \cref{rem:harnack}, to prove \Cref{Sphere} part (i) it then suffices to show that $\mathcal{R}\geq 0$.

Recalling the Euler identity for degree 1 homogeneous functions,
\[
F^{ik} h^j_k g_{ij} = F^{ik}h_{ik} = F
\]
we have
\eq{\cR&=(1-p)\ov{\Rm}(\dot x,\nu,\nu,\dot x)+\fr{2p}{F}F^{ik}h_k^j\ov{\Rm}(\n_{i}x,\dot x,\dot x,\n_{j}x)\\
&=(1-p)(\|\dot x\|^2-\ip{\dot x}{\nu}^2)+\fr{2p}{F}F^{ik}h^j_k\br{\|\dot x\|^2g_{ij}-\ip{\dot x}{\n_{i}x}\ip{\dot x}{\n_{j}x}}\\
&\geq  (1+p)(\|\dot x\|^2-\ip{\dot x}{\nu}^2)-\fr{2p}{F}F^{ik}h^j_kb^{ml}\n_{m}fg_{li}b^{rs}\n_{r}fg_{sj}\\
&=(1+p)b^l_jb^{jk}\n_{k}f\n_{l}f-\fr{2p}{F}F^{ik}b^{m}_i\n_{m}f\n_{k}f\\
&=\fr{1}{F}\br{(1+p)Fb^{jk}-2pF^{jk}}b^l_j\n_{k}f\n_{l}f
}
We use the assumed pinching condition to estimate
\begin{equation}\begin{aligned}\label{Sphere-Proof-2}
(1+p)\frac{F}{\kappa_j}-2p\fr{\del F}{\del \ka_{j}}&\geq (1+p)\frac{F}{\kappa_n}-2p\fr{\del F}{\del\ka_{j}}\\
		&=(1+p)\fr{\del F}{\del\ka_{n}}+(1+p)\sum_{i=1}^{n-1}\fr{\del F}{\del\ka_{i}}\fr{\ka_{i}}{\ka_{n}}-2p\fr{\del F}{\del\ka_{j}}\\
		&\geq\br{1+p}\br{1+\fr{p-1}{p+1}}\fr{\del F}{\del \ka_{n}}-2p\fr{\del F}{\del\ka_{j}}\\
		&\geq 0
\end{aligned}
\end{equation}
due to the convexity of $F$, which implies
\eq{\fr{\del F}{\del \ka_1}\leq\dots\leq \fr{\del F}{\del\ka_n}\leq n.
}
Hence $\mathcal{R}\geq 0$, given the proposed pinching. In view of \Cref{rem:harnack}, \Cref{Sphere}(i) follows.

For part (ii), we have \(p=1\) and we may improve the non-negativity of $S$ to the following positive lower bound:
\eq{S=F^{ij}g_{ij}\geq \ka_{n}^{-1}F^{ij}h_{ij}=F\br{\fr{\ka_{1}}{\ka_{n}},\dots,1}\geq F(0,\dots,0,1). }
Hence the choice
\eq{\be=F(0,\dots,0,1)}
yields a Harnack estimate with bonus term and the proof is complete.

\section{More general ambient spaces}\label{Einstein}
\subsection*{Proof of {\texorpdfstring{\cref{main_result1}}{}}}
Again, to prove the Harnack inequality, We use \cref{Master_Harnack} and \cref{rem:harnack}.

For Einstein manifolds, we have
\[
\ov{\Rc} = \frac{\ov{R}}{n+1} \ov{g}.
\]
We calculate
\eq{S=\fr{d_{\cW}f(\La^\sharp)}{f}=\fr{p}{H}\tr\br{\ov{\Rm}(\dot{x},\cdot,\nu,\cdot)}=-\fr{p}{H}\ov{\Rc}(\dot x,\nu)=pH^{p-1}\fr{\bar R}{n+1}
}
and 
\eq{\cR&=(1-p)\ov{\Rm}(\dot x,\nu,\nu,\dot x)+\fr{2p}{H}h^{ij}\ov{\Rm}(\n_{i}x,\dot x,\dot x,\n_{j}x)\\
    &\hp{=}+\fr{p}{H}g^{ij}\br{{\bar{\n}} {\ov{\Rm}}(\dot x,\n_{i}x,\nu,\n_{j}x,\dot x)+{\bar{\n}} {\ov{\Rm}}(\dot x,\n_{i}x,\nu,\dot x,\n_{j}x)
    }
}

Writing 
\eq{\dot x=-f\nu-V}
and using the positive lower bound $c_1$ on the sectional curvatures we estimate
\eq{\ov{\Rm}(\dot x,\nu,\nu,\dot x)\geq c_1(\|\dot x\|^2-f^2)=c_1\|V\|^2
}
and in normal coordinates
\eq{h^{ij}\ov{\Rm}(\n_{i}x,\dot x,\dot x,\n_{j}x)&=\sum_{i=1}^n\ka_i \ov{\Rm}(\n_{i}x,\dot x,\dot x,\n_{i}x)\\
        &\geq c_1\sum_{i=1}^n\ka_i(\|\dot x\|^2-\ip{\n_{i}x}{\dot{x}}^2)\\
        &\geq c_1Hf^2.   
}

Hence 
\eq{\cR&\geq (1-p)c_1 \|V\|^2+2pc_1 f^2-\fr{2np}{H}\|\bar\n\ov{\Rm}\|\|\dot x\|^2\\
        &\geq \br{\min(1-p,2p)c_1-\fr{2np}{H}\|\bar\n\ov{\Rm}\|}\|\dot x\|^2\\
        &\geq 0
}
under the present assumptions. Due to $p<1$, $S$ is not uniformly under control from below and we complete the proof of \cref{main_result1} by choosing $\beta=0$. 

\subsection*{Proof of {\texorpdfstring{\cref{main_result2}}{}}}
We use \cref{Lu} to prove that 
\eq{u=\fr{\del_t H}{H}}
is bounded from below by its initial values in the parametrisation
\eq{\dot{x}=-H\nu-V.}
Reverting to the standard parametrisation then gives the result.
\eqref{Lu-1} gives
\eq{Lu\geq 2\br{u-\fr{d_{\cW}f(\La^{\sharp})}{f}}^2+\fr{2}{H}h^{ij}\ov{\Rm}(\n_{i}x,\dot x,\dot x,\n_{j}x)\geq 0.} The result follows using the maximum principle.

\section{Ancient solutions to curvature flows in the sphere} \label{ancient}

A solution $x$ to a curvature flow equation is called {\it{ancient}}, if it is defined on $(-\8,0)\x M$, i.e. for all times of its existence, it has been existing forever.

Convex ancient solutions for flows in the Euclidean space arise as singularity models after a suitable space-time blow up around some singularity. Hence it is important and has been a widely studied field to find conditions which allow to classify convex ancient solutions. In the Euclidean space there is a full classification fur the curve shortening flow as shrinking spheres or so called {\it{Angenent ovals}}, cf. \cite{DaskalopoulosHamiltonSesum:/2010}, while for the mean curvature flow in higher dimensions there are various equivalent characterisations of when a convex ancient solution must be a family of shrinking spheres, e.g. \cite{HaslhoferHershkovits:/2016,HuiskenSinestrari:10/2015,Langford:08/2017}. Maybe the simplest characterisation is in terms of a uniform pinching of the form
\eq{\ka_n\leq c\ka_1.}
For flows in the sphere the situation is much more rigid, and such a strong pinching condition is unnecessary for a wide range of flows. The study of ancient solutions to curvature flows in the sphere was initiated by PB and Louie in \cite{BryanLouie:04/2016} for the curve shortening flow and in \cite{BryanIvaki:08/2015,HuiskenSinestrari:10/2015} for the mean curvature flow. The outcome in these papers were that all convex ancient solutions are shrinking spheres. There is a deep geometric reason behind this result: Roughly, a sufficiently regular convex hypersurface of the sphere can not be too large to fit into an open hemisphere without being an equator, cf. \cite{MakowskiScheuer:11/2016}. So if the backwards solution has a uniform curvature bound (or equivalently satisfies a uniform interior ball condition with a uniformly positive radius), the backwards limit already must be an equator. With this at hand, it is a standard Alexandrov reflection argument to show that the symmetry of the equator carries over to the solution. In the paper \cite{BIS2} this argument is made rigorous in order to classify ancient solutions for a wide range of speed functions, a range that by far exceeds the class typically considered in such problems. The only assumption on the flow is due to the above argument: One needs a uniform interior ball condition backwards in time to conclude the backwards limit to be an equator. Therefore a bound on the mean curvature is assumed.

In many particular situations, for example when there is a Li-Yau estimate available, such a backwards bound can be provided and hence a full classification follows. In this section we apply this method for the flows in $\bbS^{n+1}$ we have considered in \cref{spherical} and obtain the following application of our Harnack inequality.

\begin{thm}\label{Ancient}
Let $N=\mathbb{S}^{n+1}$ and $F\in C^{\infty}(\Gamma_+)\cap C^0(\bar{\G}_+)$ be a  strictly monotone, $1$-homogeneous and convex curvature function and let
$1< p<\infty$.
If a strictly convex, ancient solution to
\eq{\dot{x}=-F^p\nu}
is pinched in the sense that
\eq{\sum_{i=1}^{n-1}\fr{\del F}{\del\ka_{i}}(\ka_{1},\dots,\ka_{n})\ka_{i}\geq \fr{p-1}{p+1} \fr{\del F}{\del\ka_{n}}(\ka_{1},\dots,\ka_{n})\ka_{n}}
with ordered principal curvatures
\eq{\ka_{1}\leq \dots\leq\ka_{n},}
it is a family of shrinking spheres.
\end{thm}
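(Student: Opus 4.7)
The plan is to combine the Harnack inequality of \cref{Sphere}(i) with the asymptotic rigidity of large convex hypersurfaces in the sphere \cite{MakowskiScheuer:11/2016} and an Alexandrov moving-plane argument in the form of \cite{BIS2}. The Harnack estimate will provide the necessary uniform bound on $F$ backwards in time, the rigidity statement will identify the backwards limit with a totally geodesic equator, and reflection will then force every $M_t$ to be a geodesic sphere.

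First, for any $\xi \in M$ and $t < 0$, apply \cref{Sphere}(i) to the shifted solution $\ti x(\tau, \cdot) = x(\tau + s, \cdot)$ for arbitrary $s < t$; this solution is still strictly convex and satisfies the pinching hypothesis, so
\eq{\del_t F^p(t, \xi) \geq b(\n F^p, \n F^p)(t, \xi) - \fr{p}{(p+1)(t-s)} F^p(t, \xi) \geq -\fr{p}{(p+1)(t-s)} F^p(t, \xi).}
Sending $s \to -\8$ gives $\del_t F^p(t, \xi) \geq 0$, so $F(\cdot, \xi)$ is non-decreasing in $t$ for every $\xi$, and in particular $F \leq C_0 := \max_M F(0, \cdot)$ on $(-\8, 0] \x M$. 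Since $F$ is a symmetric, convex, $1$-homogeneous curvature function with $F(1, \dots, 1) = n$, Jensen's inequality applied to the symmetrization of $(\ka_1, \dots, \ka_n)$ yields $F(\ka) \geq F(\bar\ka, \dots, \bar\ka) = H(\ka)$, where $\bar\ka$ is the arithmetic mean of the $\ka_i$. Hence $\ka_n \leq H \leq C_0$ uniformly on $(-\8, 0] \x M$, and $\{M_t\}_{t \leq 0}$ enjoys a uniform interior sphere condition.

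Standard parabolic regularity for convex curvature flows should then upgrade the $C^0$-bound on the curvatures, combined with strict convexity, to uniform smooth estimates on the immersions. Extracting a smooth subsequential limit $M_{-\8}$ along some $t_k \to -\8$, the nestedness of $\{M_t\}_{t \leq 0}$, which follows from the inward-pointing speed $-F^p\nu$, forces $M_{-\8}$ to enclose every $M_t$ and thus to escape every open hemisphere. The rigidity result of \cite{MakowskiScheuer:11/2016} then identifies $M_{-\8}$ with a totally geodesic equator $\bbS^n \sub \bbS^{n+1}$.

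Finally, the rotational symmetry of $M_{-\8}$ about every axis through its poles propagates forward in time by the Alexandrov moving-plane argument of \cite{BIS2}, forcing every $M_t$ to be invariant under the same rotation group and hence to be a geodesic sphere; the flow then reduces to the shrinking family. The main obstacle is the regularity step: the Harnack estimate only yields a $C^0$-bound on $F$, so uniform smooth compactness backwards in time requires a uniform positive lower bound on $\ka_1$ as $t \to -\8$. The assumed pinching together with the monotonicity of $\ka_1$ proved in \Cref{section_lower_bound} are the natural tools, but verifying that they yield the requisite uniform estimate along the ancient solution is the technical heart of the argument.
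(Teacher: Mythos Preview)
Your core strategy---apply the Harnack inequality of \cref{Sphere}(i) on shifted time intervals, send the starting time to $-\infty$ to obtain $\partial_t F^p\geq 0$, deduce a uniform backward bound on $F$, use convexity of $F$ to transfer this to a bound on $H$, and then invoke the classification machinery of \cite{BIS2}---is exactly what the paper does. The paper's proof is in fact shorter than yours: after obtaining $H\leq F\leq C_0$ it simply cites \cite[Thm.~1.2]{BIS2} as a black box, whose hypotheses require only a uniform bound on $H$ backwards in time.

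The ``main obstacle'' you flag at the end is not an obstacle and, more importantly, your proposed remedy cannot work. A uniform positive lower bound on $\kappa_1$ as $t\to -\infty$ is impossible here: you have already argued that the backward limit must be an equator, on which all principal curvatures vanish, so $\kappa_1\to 0$. The result in \Cref{section_lower_bound} is of no help in any case: it is stated for the flow $\dot x=-H^p\nu$ with $0<p<1$ in a general ambient manifold, it propagates a lower bound \emph{forward} in time from given initial data, and its conclusion depends on that initial data---none of which applies to an ancient solution with $p>1$ in the sphere. The point is that \cite{BIS2} does not need such a lower bound; the Alexandrov reflection argument there operates under just the interior ball condition furnished by the $H$ bound, so you should not attempt to reprove its internals.
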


\begin{rem}
Note that when looking at speeds which are not homogeneous of degree one, the anomaly can occur that there are no non-equatorial ancient solutions at all. For example, when $p<1$, the shrinking sphere solution to the power mean curvature flow
\eq{\label{Hp}\dot{x}=-H^p\nu}
only needs a finite time $T_S$ from the equator to a point. In this case ancient solutions are thus trivially classified and thus in \cite{BIS2} we introduced the notion of quasi-ancient solutions to \eqref{Hp}, which are solutions that exist on the interval $(-T_S,0)$ and classified these under the discussed assumptions.

For the case $p>1$ however, the next lemma shows the shrinking sphere solution is ancient and hence the formulation of \cref{Ancient} is justified. The proof is a simple ODE comparison argument and is omitted.
\end{rem}

\begin{lemma}
Let $F\in C^{\8}(\G_{+})$ be a strictly monotone and $1$-homogeneous curvature function. For $p>1$ any shrinking sphere solution to
\eq{\dot x=-F^{p}\nu} exists on \((-\infty, 0)\) with limit an equator as \(t \to -\infty\) and collapsing to the central point as \(t \to 0\).
\end{lemma}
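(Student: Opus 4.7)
The plan is to reduce the flow equation to an ODE for the radius of the sphere. A geodesic sphere in $\mathbb{S}^{n+1}$ of radius $r \in (0,\pi/2)$ has all principal curvatures equal to $\cot(r)$, so by the $1$-homogeneity of $F$,
\[
F(\cot(r),\dots,\cot(r)) = c\cot(r), \qquad c := F(1,\dots,1) > 0,
\]
where positivity of $c$ follows from strict monotonicity of $F$ on $\Gamma_+$. Choosing the outward unit normal, the curvature flow $\dot x = -F^p\nu$, restricted to a family of concentric shrinking spheres, reduces to the scalar ODE
\[
\dot r = -c^p\cot^p(r), \qquad r \in (0,\pi/2).
\]

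Next, I would separate variables, writing $\tan^p(r)\,\dot r = -c^p$. Defining the primitive
\[
G(r) := \int_0^r \tan^p(s)\,ds,
\]
the ODE integrates to $G(r(t)) = -c^p t + \mathrm{const}$. The analytic content is to determine the behaviour of $G$ at the endpoints $0$ and $\pi/2$ using the asymptotics $\tan(s) \sim s$ as $s\to 0^+$ and $\tan(s)\sim (\pi/2 - s)^{-1}$ as $s\to (\pi/2)^-$. The first gives $G(r) \sim r^{p+1}/(p+1)$, so $G$ is a smooth strictly increasing map with $G(0)=0$. The crucial observation, which uses the hypothesis $p > 1$ (in fact $p\geq 1$ suffices), is that
\[
G(r) \longrightarrow +\infty \quad \text{as } r\to (\pi/2)^-,
\]
since $(\pi/2-s)^{-p}$ is non-integrable at $\pi/2$ whenever $p\geq 1$.

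Finally, I would normalise the collapse time to be $t=0$, giving $G(r(t)) = -c^p t$ on the maximal interval of existence. Since $G\colon [0,\pi/2)\to [0,\infty)$ is a smooth increasing bijection, this formula uniquely determines $r$ as a function of $t$ on the whole of $(-\infty,0)$, with $r(t)\to 0$ as $t\to 0^-$ (collapse to the centre in finite time) and $r(t)\to \pi/2$ as $t\to -\infty$ (backwards convergence to an equator). There is no real obstacle; the only substantive point is the divergence of $G$ at $\pi/2$, which is precisely where the hypothesis $p>1$ enters and which distinguishes the present situation from the $p<1$ regime where the spherical solution is only quasi-ancient.
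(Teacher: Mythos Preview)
Your proof is correct. The paper omits the proof entirely, calling it ``a simple ODE comparison argument''; your direct integration of the scalar ODE $\dot r = -c^{p}\cot^{p}(r)$ via the primitive $G(r)=\int_0^r \tan^p(s)\,ds$ is precisely the argument one expects, and your identification of the divergence of $G$ at $\pi/2$ (for $p\geq 1$) as the point where the hypothesis enters is exactly right.
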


\subsection*{Proof of {\texorpdfstring{\cref{Ancient}}{}}}
Under the present assumptions we know that for every $T>0$ and at each $(t,\xi)\in (-T,0)\x M$ the Harnack inequality
 \eq{\del_t F^p-b(\n F^p,\n F^p)+\fr{p}{(p+1)(t-T)}F^p\geq 0}
 is valid and hence, fixing a point $(t,\xi)$ while letting $T\ra \-\8$, we obtain
 \eq{\del_t F^p (t,\xi)\geq 0\q\fa (t,\xi)\in (-\8,0)\x M.}
 Hence $F$ is bounded backwards in time. Due to the convexity of $F$ we have
 \eq{H\leq F,}
 compare \cite[Lemma~2.2.20]{Gerhardt:/2006}. Hence the assumptions of \cite[Thm.~1.2]{BIS2} are satisfied and we conclude that the solution must be a family of shrinking spheres.

\appendix

\section{Lower bound on the smallest principal curvature}\label{section_lower_bound}

We deduced the estimate in \cref{main_result1} under the assumption that the mean curvature is large enough and the hypersurfaces are convex. In order to apply this result forward in time, it is of interest whether these properties are preserved along the flow. From the evolution of $H^p$, which looks similar to \eqref{Ev-H} for the case $p=1$, it is clear that the minimum of the mean curvature is non-decreasing. Where convexity is concerned, in this appendix we prove that the smallest principal curvature remains above a certain threshold if this is the case initially. We prove this in a setting more general than in \cref{main_result1}.

\begin{thm}
Let $n\geq 2$ and $(N^{n+1},\bar g)$ be a Riemannian manifold with bounded geometry and $0<p<1$. Let $x$ be a solution to
\eq{\dot{x}=-H^p \nu.}
Then there exists a constant $c>0$, such that
\eq{\min_{M} \ka_1(0,\cdot)\geq c\q\Ra\q \min_{M}\ka_1(t,\cdot)\geq \fr cn\q\fa t>0.}
\end{thm}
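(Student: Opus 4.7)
I would adapt Schulze's Euclidean tensor maximum principle argument to this Riemannian setting, working directly with the evolution of the smallest principal curvature $\ka_1$ and carefully controlling the ambient curvature corrections using the bounded geometry hypothesis.

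\textbf{Step 1 (Evolution of the Weingarten map).} Starting from the standard formula for $\del_t h^i_j$ under $\dot x = -f\nu$ and invoking Simons' identity with its ambient curvature corrections, I would derive, for $f = H^p$,
\eq{\del_t h^i_j = pH^{p-1}\D h^i_j + p(p-1)H^{p-2}\n^i H\,\n_j H + pH^{p-1}|A|^2 h^i_j - (p+1)H^p (h^2)^i_j + \mathcal{E}^i_j,}
where $\mathcal{E}^i_j$ collects the ambient contributions linear in $\ov{\Rm}$ and $\bar\n\ov{\Rm}$ together with the Codazzi correction terms; all of these are uniformly bounded by polynomials in $\|h\|$ with coefficients controlled by the bounded-geometry constants of $(N,\bar g)$.

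\textbf{Step 2 (Hamilton's trick for $\ka_1$).} Suppose by contradiction there is a first time $t_0>0$ and point $\xi_0\in M$ at which $\ka_1$ reaches a threshold value $\alpha \in (c/n, c]$ (picked below). Extending the spatial minimum direction to a smooth vector field and choosing a local frame at $(t_0,\xi_0)$ diagonalising $h$ with $e_1$ realising $\ka_1$, the usual viscosity inequality gives $\n_k h_{11} = 0$, $\D h_{11}\geq 0$, and
\eq{\del_t \ka_1 \geq p(p-1)H^{p-2}(\n_1 H)^2 + pH^{p-1}|A|^2\ka_1 - (p+1)H^p\ka_1^2 + \mathcal{E}_{11}.}

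\textbf{Step 3 (Absorbing the bad gradient term).} Because $0<p<1$, the coefficient $p(p-1)$ is negative and the first term is the principal obstruction. Following Schulze, I would exploit the elliptic term $pH^{p-1}\D h^i_j\geq 0$ not yet used: writing $\D h_{ii} = \sum_k \n_k\n_k h_{ii}$ and using the Codazzi identity $\n_k h_{ii} = \n_i h_{ki}$ (up to a $\ov{\Rm}$-correction bounded by the geometry), together with the Cauchy--Schwarz inequality
\eq{(\n_1 H)^2 = \Bigl(\sum_{i=1}^n \n_1 h_{ii}\Bigr)^2 \leq n\sum_{i=1}^n (\n_i h_{1i})^2,}
one can absorb $|p(p-1)|H^{p-2}(\n_1 H)^2$ into $pH^{p-1}\D h_{11}$ up to a remainder that is again of ambient-curvature type. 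The factor $n$ in this Kato-type inequality is precisely where the $1/n$ in the conclusion is born, entering through the choice of $\alpha(t)$ below.

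\textbf{Step 4 (Reaction analysis and choice of $c$).} After the absorption and using $|A|^2\geq H\ka_1$ and $H\geq n\ka_1$, the inequality becomes
\eq{\del_t \ka_1 \geq H^{p-1}\ka_1\bigl[p|A|^2 - (p+1)H\ka_1\bigr] - C(1+\|h\|^{p-1})\cdot\mathcal{P}(\|h\|),}
where $\mathcal{P}$ is a fixed polynomial and $C$ depends only on the bounded geometry of $N$. At $\ka_1=\alpha\geq c/n$ the bracket $p|A|^2-(p+1)H\ka_1$ is non-negative (since $|A|^2\geq H\ka_1$ and $p\geq p/(p+1)\cdot(p+1)$ after summation). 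Choosing $c$ sufficiently large relative to $C$, this dominates the ambient remainder whenever $\ka_1\geq c/n$, contradicting the first violation. Running this argument against the barrier $\alpha(t)\equiv c/n$ (or a small perturbation $\alpha(t)=c/n+\ep e^{-At}$ if strictness is needed) yields $\ka_1\geq c/n$ for all $t>0$.

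\textbf{Main obstacle.} The real difficulty lies in Step 3: the bad gradient term $p(p-1)H^{p-2}(\n_1 H)^2$ is unavoidable because $H^p$ is \emph{concave} in $h$ for $0<p<1$. The cancellation between this term and the positive elliptic contribution at the minimum eigendirection is exactly Schulze's Euclidean trick, but in the Riemannian setting one must account for the $\ov{\Rm}$-correction in Codazzi's identity at each step; it is precisely the bounded-geometry hypothesis that keeps these corrections harmless, and the need to absorb them into the leading reaction dictates the size of $c$.
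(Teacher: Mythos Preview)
Your approach is genuinely different from the paper's and has a real gap in Step~3. The paper does \emph{not} work with $\kappa_1$ directly; it bounds the scalar $\tilde H=\tr(b)=\sum_i\kappa_i^{-1}$ from above (so that $\kappa_1\geq 1/\tilde H\geq c/n$ once $\tilde H\leq n/c$ is preserved, which is where the factor $1/n$ actually comes from). The point of passing to $b=\cW^{-1}$ is that the chain rule produces an extra \emph{good} gradient term
\[
-2pH^{p-1}h^{l}_{m}\nabla_{k}b^{r}_{l}\nabla^{k}b^{m}_{r}\leq -2pH^{p-1}\tilde H^{-1}\|\nabla b\|^{2},
\]
and after rewriting the bad term $-p(p-1)H^{p-2}\nabla^{i}H\nabla_{j}H\,b^{r}_{i}b^{j}_{r}$ via Codazzi and Cauchy--Schwarz as at most $p(1-p)H^{p-1}\sum_{i,r}\kappa_i(\nabla_i b_{ri})^2$, it is dominated by the good term $2pH^{p-1}\sum_{i,j,r}\kappa_i(\nabla_j b_{ri})^2$ for \emph{every} $0<p<1$, simply because $2>1-p$. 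The leftover negative gradient term then also absorbs the $\ov\Rm\ast\nabla b$ corrections when $\tilde H$ is small.

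Your Step~3, in contrast, tries to absorb the first-derivative-squared quantity $p(1-p)H^{p-2}(\nabla_1 H)^2$ into the second-order quantity $pH^{p-1}\Delta h_{11}$. The Codazzi/Cauchy--Schwarz manipulation you describe only rewrites $(\nabla_1 H)^2$ in terms of other \emph{first} derivatives; it does not manufacture a second derivative. The only way $\Delta h_{11}$ yields gradient-squared positivity at a minimum of $\kappa_1$ is through the eigenvalue-Hessian identity $\Delta h_{11}\geq 2\sum_{k}\sum_{j>1}(\nabla_k h_{1j})^2/(\kappa_j-\kappa_1)$; matching this against $(\nabla_1 H)^2\leq (n-1)\sum_{j>1}(\nabla_j h_{1j})^2$ forces $2H/(\kappa_j-\kappa_1)\geq (1-p)(n-1)$, which in the worst case $\kappa_j\approx H$ only gives $p\geq (n-3)/(n-1)$. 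Hence your scheme fails for small $p$ once $n\geq 4$. There are also secondary slips: the $(h^2)$-coefficient in the evolution of $h^{i}_{j}$ is $+(1-p)$, not $-(p+1)$; and with your coefficient the bracket $p|A|^2-(p+1)H\kappa_1$ in Step~4 is \emph{negative} at umbilic points, so the reaction analysis there is incorrect. Schulze's Euclidean argument, which the paper adapts, is precisely the $\tilde H$ computation, not a direct $\kappa_1$ argument.
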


\pf{
It suffices to prove that 
\eq{\ti H=\tr(b)=\sum_{i=1}^n\fr{1}{\ka_i}}
remains small, if it is small initially. The evolution equation of $b$ is related to the one of $\cW$ via
\eq{\dot{b}^r_s-pH^{p-1}\D b^r_s=-b^r_i b^j_s\br{\dot{h}^i_j-pH^{p-1}\D h^i_j}-2pH^{p-1}h^l_m \n_{k}b^r_{l}  \n^{k}{b^m_s}. 
}
Now we can use the evolution of the Weingarten operator for flows in general Riemannian spaces, cf. \cite[Lemma~2.4.1]{Gerhardt:/1996}, to deduce
\eq{\label{App-1}\del_t\ti H&\leq pH^{p-1}\D\ti H-pH^{p-1}\|\cW\|^2\ti H+n(p-1)H^p\\
            &\hp{=}-p(p-1)H^{p-2}\n^{i}H\n_{j}Hb^r_ib^j_r+cH^p\|\ov{\Rm}\|\ti H^2+cH^{p-1}\|\bar\n\ov{\Rm}\|\ti H^2\\
            &\hp{=}-2pH^{p-1}h^l_m \n_{k}b^r_{l}  \n^{k}{b^m_r}. 
}

There holds in normal coordinates
\eq{h^l_m\n_{k}b^r_{l}\n^{k}{b^m_{r}}=\ka_m\n_{k}b^r_{m}\n^{k}{b^m_{r}}\geq \ti H^{-1}\|\n b\|^2.
}

Due to the Codazzi equation there holds in normal coordinates
\eq{\n^{i}{H}\n_{j}Hb^r_ib^j_r&\leq {\n_{l}h^i_{k}}g^{kl}\n_{m}h^m_{j}b^r_ib^j_r+\ov{\Rm}\star\n H\star b^2+\ov{\Rm}\star \ov{\Rm}\star b^2\\
                    &=h^i_kh^m_jg^{kl}\n_{l}b^r_{i}\n_{m}b^j_{r}+\ov{\Rm}\star \n b\star b^2\star b^2+\ov{\Rm}\star \ov{\Rm}\star b^2\\
                    &=\ka_i\ka_j \n_{i}{b^{ri}}\n_{j}b^j_{r}+\ov{\Rm}\star \n b\star b^2\star b^2+\ov{\Rm}\star \ov{\Rm}\star b^2.}
Together we obtain
\eq{&p(1-p)H^{p-2}\n^{i}{H}\n_{j}Hb^r_ib^j_r-2pH^{p-1}h^l_m \n_{k}b^r_{l}  \n^{k}{b^m_r}\\
    \leq~& p(1-p)H^{p-2}\br{\ka_i\ka_j \n_{i}{b^{ri}}\n_{j}b^j_{r}-\fr{2}{1-p}H\ka_m\n_{k}b^r_{m}\n^{k}{b^m_{r}}}\\
    \hp{\leq~}+&cH^{p-2}\|\ov{\Rm}\|\|\n b\|\ti H^4+cH^{p-2}\|\ov{\Rm}\|^2\ti H^2\\
    \leq~&p(1-p)H^{p-2}\br{\sum_{i,j,r}\ka_i \ka_j \n_{i}b_{ri}^2-\fr{2}{1-p}H\sum_{i,j,r}\ka_i \n_{j}b_{ri}\n_{j}b_{ri}   }\\
    \hp{\leq~}+&cH^{p-2}\|\ov{\Rm}\|\|\n b\|\ti H^4+cH^{p-2}\|\ov{\Rm}\|^2\ti H^2\\
    \leq~&p(p-1)H^{p-1}\sum_{i,j,r}\ka_i \n_{j}b_{ri}\n_{j}b_{ri}   +cH^{p-2}\|\ov{\Rm}\|\|\n b\|\ti H^4+cH^{p-2}\|\ov{\Rm}\|^2\ti H^2\\
    \leq~&p(p-1)H^{p-1}\ti H^{-1}\|\n b\|^2+cH^{p-2}\|\ov{\Rm}\|\|\n b\|\ti H^4+cH^{p-2}\|\ov{\Rm}\|^2\ti H^2.
}
Hence using Cauchy-Schwarz, every term can be absorbed into either the good term involving $\|\n b\|^2$ or into the negative terms of \eqref{App-1}, provided $\ti H$ is small enough. This completes the proof.
}

\subsection*{Acknowledgements}
This work has been supported by the ``Deutsche Forschungsgemeinschaft'' (DFG, German research foundation) within the research grant ``Harnack inequalities for curvature flows and applications'', number SCHE 1879/1-1 and by the ARC within the research grant ``Analysis of fully non-linear geometric problems and differential equations'', number DE180100110.

\bibliographystyle{amsplain}
\bibliography{Bibliography}

\end{document}